\documentclass{amsart}
\usepackage{tikz}
\usetikzlibrary{shapes.geometric}
\usepackage[center]{caption}
\usepackage{xcolor}
\usepackage{amssymb,latexsym,amsmath,extarrows}
\usepackage{graphicx,mathrsfs,comment}
\usepackage{hyperref,url}
\usepackage{pict2e}

\usepackage{amstext}
\usepackage{bbm}

\numberwithin{equation}{section}

\usepackage{esint}

\newtheorem{theorem}{Theorem}[section]
\newtheorem{lemma}[theorem]{Lemma}

\newtheorem{proposition}[theorem]{Proposition}

\newtheorem{corollary}[theorem]{Corollary}

\makeatletter
\newcommand{\barredsum}{%
  \DOTSB\mathop{\mathpalette\@barredsum\relax}\slimits@
}
\newcommand{\@barredsum}[2]{%
  \begingroup
  \sbox\z@{$#1\sum$}%
  \setlength{\unitlength}{\dimexpr2pt+\ht\z@+\dp\z@\relax}%
  \@barredsumthickness{#1}%
  \vphantom{\@barredsumbar}%
  \ooalign{$\m@th#1\sum$\cr\hidewidth$#1\@barredsumbar$\hidewidth\cr}%
  \endgroup
}
\newcommand{\@barredsumbar}{%
  \vcenter{\hbox{\begin{picture}(0,1)\roundcap\Line(0,0)(0,1)\end{picture}}}%
}
\newcommand{\@barredsumthickness}[1]{
  \linethickness{%
    1.25\fontdimen8
      \ifx#1\displaystyle\textfont\else
      \ifx#1\textstyle\textfont\else
      \ifx#1\scriptstyle\scriptfont\else
      \scriptscriptfont\fi\fi\fi 3
  }%
}
\makeatother

\newcommand{\e}{\epsilon}

\newcommand{\ZR}{\mathbb{R}}

\newcommand{\ZZ}{\mathbb{Z}}

\newcommand{\cT}{{\mathcal T}}

\newcommand{\R}{\mathbb{R}}

\begin{document}

\title[Energy bound for points on algebraic surfaces]{Near diagonal additive energy bound for points on algebraic surfaces}

\author{Yifan Jing}
\address{Department of Mathematics, Ohio State University, Columbus, OH, USA}
\email{jing.245@osu.edu}

\author{Shukun Wu}
\address{Department of Mathematics, Indiana University Bloomington, Bloomington, IN, USA}
\email{shukwu@iu.edu}

\date{}
\begin{abstract} 
Let $F:\ZR^3\to\ZR$ be a polynomial that is irreducible over $\ZR$ with $\deg F\geq2$.
We prove that, for any finite $X\subset Z(F)$ that does not concentrate on affine lines,
\[
    E(X)=\#\{(a,b,c,d)\in X^4: a+b=c+d\}\ll_{\deg F,\,\epsilon}(\# X)^{2+\epsilon}.
\]
In particular, this answers a question of Bourgain and Demeter concerning finite subsets of the unit sphere.

\end{abstract}

\maketitle

\section{Introduction}

For finite sets $X,Y\subset\ZR^n$, define
\[
    d_{X-Y}(t)=\#\{(x,y)\in X\times Y: x-y=t\}, \qquad E(X,Y)=\sum_s d_{X-Y}(s)^2.
\]
For convenience, set $d_X=d_{X-X}$ and $E(X)=E(X,X)$.
A useful identity is
\begin{equation}
\label{useful-identity}
    E(X,Y)=\sum_t d_X(t)d_Y(t).
\end{equation}
For a polynomial $P:\ZR^n\to\ZR$, write
\[
    Z(P)=\{x\in\ZR^n:P(x)=0\}.
\]
For two polynomials $P,Q$, write $Z(P,Q)=Z(P)\cap Z(Q)$.

\begin{theorem}
\label{main-thm}
Let $F:\ZR^3\to\ZR$ be a polynomial that is irreducible over $\ZR$.
Define 
\begin{equation}
\label{quantifier}
    \Lambda_{F}(X) = \max\left\{1, \,\sup_{\ell \text{ an affine line in }Z(F)}\#(X\cap\ell)\right\}
\end{equation}
to quantify the extent to which $X$ concentrates on lines in $Z(F)$.

If $\deg F\geq2$, then for any $\e>0$, there exists a constant $C_\e$ depending only on $\e$ and the degree of $F$ such that
\begin{equation}
\label{main-esti}
    E(X)\leq C_{\e}\Lambda_{F}(X)(\#X)^{2+\e}
\end{equation}
for all finite $X\subset Z(F)$.
\end{theorem}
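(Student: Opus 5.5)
We will bound $E(X)$ by counting quadruples $(a,b,c,d)\in X^4$ with $a+b=c+d$, which we write as $a-c=d-b=t$. For a fixed $t\neq0$, the pairs $(a,c)$ with $a-c=t$ correspond to points of $X\cap(X+t)\subset Z(F)\cap Z(F(\cdot-t))$. The plan is to exploit the following dichotomy: either this intersection is a curve of degree at most $(\deg F)^2$, or the surface $Z(F)$ is invariant under translation by $t$.

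The first step is algebraic. Since $F$ is irreducible with $\deg F\ge2$, the surfaces $Z(F)$ and $Z(F(\cdot-t))$ share a component only for $t$ in a proper algebraic set $T_F$, consisting of translation symmetries of $Z(F)$. When $Z(F)$ is a cylinder over a plane curve, $T_F$ is a line of directions. We treat the cylinder case separately: by projecting along the ruling direction we reduce to the planar problem for points on an irreducible curve, and there the lines in $Z(F)$ are exactly the rulings, which accounts for the factor $\Lambda_F(X)$. For $t\notin T_F$, the set $\gamma_t=Z(F)\cap Z(F(\cdot-t))$ is an algebraic curve of degree $O(\deg F^2)$, and $d_X(t)=\#(X\cap\gamma_t)$.

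Next we dyadically decompose the set of $t$ according to $d_X(t)\sim\lambda$, using \eqref{useful-identity}:
\[
E(X)=\sum_t d_X(t)^2\lesssim\log\#X\cdot\max_\lambda \lambda^2\,\#\{t:d_X(t)\ge\lambda\}.
\]
It then suffices to show $\#\{t: d_X(t)\ge\lambda\}\lesssim_\e \Lambda_F(X)(\#X)^{2+\e}\lambda^{-2}$ for $\lambda$ large compared with $\Lambda_F(X)$; the range $\lambda\lesssim\Lambda_F(X)$ is trivial, since $\sum_t d_X(t)=(\#X)^2$. Each such $t$ yields a "rich" curve $\gamma_t$ containing at least $\lambda$ points of $X$. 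Distinct $t$ give curves that are translates of one another intersected with $Z(F)$, and two such curves share only $O_{\deg F}(1)$ points unless they share a component. We then count the incidences $I=\sum_t\#(X\cap\gamma_t)\ge\lambda N_\lambda$ with a Pach--Sharir/Szemer\'edi--Trotter type bound for bounded-degree curves with $k=O(1)$ degrees of freedom. Alternatively, and more robustly, we apply polynomial partitioning of Guth--Katz type inside $Z(F)$, or Cauchy--Schwarz, which gives $I\lesssim \#X\,N_\lambda^{1/2}+N_\lambda$, using the fact that two points determine $O(1)$ curves. This yields $N_\lambda\lesssim (\#X)^2/\lambda^2$, as desired. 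The $\e$-loss comes from iterating the partitioning and from the dyadic logarithms.

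The main obstacle is the claim that distinct curves $\gamma_t,\gamma_{t'}$ share few points of $X$, equivalently that a pair of points lies on boundedly many $\gamma_t$. A point pair $(x,y)$ lies on $\gamma_t$ when $x-t,\,y-t\in Z(F)$, that is, when $t\in (x-Z(F))\cap(y-Z(F))$, which is itself a curve. So pairs do not determine $t$ uniquely, and the naive Cauchy--Schwarz argument fails. What we expect to need instead is the following. Common components of many $\gamma_t$ must be lines in $Z(F)$ or, on doubly ruled or quadric surfaces such as the sphere, circles. Lines are charged to $\Lambda_F(X)$. Circles on the sphere form a genuinely two-parameter family, and there one must use a Bourgain--Demeter style induction on scales or the decoupling-free combinatorial argument, for instance by bounding incidences between points and a $3$-parameter family of curves through a ruled-surface (Guth--Katz) analysis. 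To handle the reducible and degenerate components uniformly we plan to use the Guth--Zahl bound for rich curves in surfaces, and this is where we expect most of the technical work to lie.
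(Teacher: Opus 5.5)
There is a genuine gap: you never bound $N_\lambda=\#\{t:d_X(t)\ge\lambda\}$. The bound $N_\lambda\lesssim\Lambda_F(X)(\#X)^{2+\e}\lambda^{-2}$ is, up to logarithms, equivalent to the theorem itself, so the dyadic reduction only restates the problem. The one concrete argument you give, Cauchy--Schwarz via ``two points determine $O(1)$ curves,'' is false, as you note yourself. What replaces it is a list of tools (Guth--Zahl, ruled-surface analysis, induction on scales) rather than an argument.

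You also misdiagnose the obstruction as degenerate overlaps. On the unit sphere, $\gamma_t$ is the circle in the plane $x\cdot t=|t|^2/2$, so distinct $t$ give distinct circles with no common components. The difficulty is that this is a genuine point--circle incidence problem with three degrees of freedom; circles on a sphere form a three-parameter family, not a two-parameter one. Take $N=\#X$ and feed the best known point--circle incidence bound $I\lesssim N^{2/3}M^{2/3}+N^{6/11}M^{9/11}\log^{2/11}+N+M$ into your scheme. You get at best
\[
    \lambda^2N_\lambda\lesssim\min\bigl\{\lambda N^2,\ N^3\lambda^{-7/2}+N^2\lambda^{-1}\bigr\},
\]
which peaks at $\lambda\approx N^{2/9}$. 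That gives only $E(X)\lesssim N^{2+2/9}$, the exponent of Mudgal's bound quoted in the introduction. Reaching $N^{2+\e}$ by counting rich curves would need a Szemer\'edi--Trotter-strength incidence theorem for circles, which is open.

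The idea you are missing is to apply polynomial partitioning to the energy itself rather than to an incidence problem. First change variables linearly so that $Z(F)$ splits into at most $\deg F$ graphs over the $(y_1,y_2)$-plane. Then partition $\pi(X)$ with a planar $q$ of degree $D$ into $O(D^2)$ cells $X_i$, each with $O(N/D^2)$ points.

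For $t\notin\mathcal T_F$, the curve $\Gamma_t=Z(F(x),F(x-t))$ has bounded degree. By Barone--Basu, removing $Z(q(\pi x))\cup Z(q(\pi(x-t)))$ cuts it into $O(D)$ connected pieces. On each piece, $\pi x$ and $\pi(x-t)$ stay in fixed cells, so only $O(D)$ ordered pairs $(i,j)$ have $d_{X_i-X_j}(t)>0$. Cauchy--Schwarz then gives $d_Y(t)^2\lesssim D\sum_{i,j}d_{X_i-X_j}(t)^2$.

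Since $\sum_t d_{X_i-X_j}(t)^2=\sum_u d_{X_i}(u)d_{X_j}(u)$, the same crossing count with $i=j$ gives $E(Y)\lesssim D^2\sum_iE(X_i)+\Lambda_F(X)D^2N^2$. The second term comes from $t\in\mathcal T_F$, whose cosets through $Z(F)$ are lines in $Z(F)$.

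The wall points $W$ lie on curves in $Z(F,q\circ\pi)$ of total degree $O(D)$. By B\'ezout, $d_W(h)=O(D^2)$ for all $h$ except three kinds of contribution: $O(D^2)$ exceptional translations where one component is a translate of another, and pairs lying on line components, which are charged to $\Lambda_F$. Only lines can be translation-invariant, which is why the exceptional set is so small. Hence $E(X,W)\lesssim\Lambda_F(X)D^2N^2$.

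Together these give the recursion $E(X)\lesssim\Lambda_F(X)D^2N^2+D^4\max_{\#X'\lesssim N/D^2}E(X')$. This closes an induction on $\#X$ with an $\e$-loss once $D=D(\e)$ is large. No incidence theorem is needed: the argument uses that $\Gamma_t$ records which pairs of cells interact under translation by $t$. That additive structure is exactly what your reformulation as incidences between $X$ and the curves $\gamma_t$ throws away.
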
 

Note that whenever $\Lambda_{F}(X)\ll 1$,  \eqref{main-esti} yields
\begin{equation}
\label{diagonal-esti}
    E(X)\ll_{\e}(\#X)^{2+\e}.
\end{equation}
This estimate is essentially sharp, since the diagonal solutions alone contribute $\asymp (\#X)^2$ to $E(X)$.
Surfaces such as the paraboloid and the unit sphere are convex and hence satisfy $\Lambda_F(X)\ll 1$. 
Bourgain and Demeter \cite{Bourgain-Demeter} asked whether the near-diagonal estimate \eqref{main-esti} holds when $Z(F)$ is the unit sphere, and Theorem \ref{main-thm} answers this question affirmatively.

\medskip

By contrast, if $Z(F)$ contains a line, one may take $X$ to be an arithmetic progression on that line, in which case $E(X)$ is of order $(\#X)^3$. 
Familiar examples include the quadratic cone and the hyperbolic paraboloid: the former is ruled by a one-parameter family of lines through its vertex, whereas the latter is doubly ruled. An arithmetic progression along any one of these ruling lines therefore exhibits the cubic growth above.

The presence of such lines does not make Theorem $\ref{main-thm}$ invalid for these surfaces. The obstruction to a near-diagonal estimate is measured quantitatively by $\Lambda_F(X)$, rather than by the existence of lines on $Z(F)$. 
For example, if $X$ is contained in a quadratic cone or a hyperbolic paraboloid but meets every affine line contained in the surface in $O(1)$ points, then $\Lambda_F(X)\ll1$, and the near-diagonal estimate $\eqref{diagonal-esti}$ still follows. 
More generally, $\eqref{main-esti}$ quantifies the loss caused by concentration of $X$ along the ruling lines. 
Theorem~\ref{main-thm} shows that, for algebraic surfaces, concentration on affine lines is the only obstruction for the near diagonal bound. We also remark that the dependence on $\Lambda_F(X)$ in $\eqref{main-esti}$ is sharp.

Theorem \ref{main-thm} is also sharp in the following sense: for $k\geq 3$, one cannot expect the near-diagonal estimate $E_k(X)\ll_\epsilon |X|^{k+\epsilon}$ for any finite $X\subset Z(F)$ that does not concentrate on affine lines,
where
\[
    E_k(X)
    :=
    \#\{(x_1,\ldots,x_{2k})\in X^{2k}:
    \sum_{i=1}^kx_i
    =
    \sum_{j=k+1}^{2k}x_j\}.
\]
A typical example is the paraboloid $Z(x^2+y^2-z)$, where $E_k(X)$ can be as large as $(\#X)^{2k-2}$.
We collect some other examples in Section \ref{example-section}.

\medskip

A few results were known for particular surfaces.
When $Z(F)$ is the paraboloid, Theorem \ref{main-thm} was proved in \cite{Bourgain-Demeter} using the corner-counting result of Pach and Sharir \cite{Pach-Sharir}.
More precisely, it is proved that if $X\subset Z(x^2+y^2-z)$, then
\[
    E(X)\ll (\#X)^2\log (\#X),
\]
which is sharp up to a constant factor.
When $Z(F)$ is the unit sphere, Mudgal \cite{Mudgal-sphere} proved that $E(X)\ll_\e (\#X)^{2+2/9+\e}$.

The spherical case of Theorem \ref{main-thm} also yields a result for lattice points on large spheres that appears to be new in the sparse regime.
More precisely, let
\[
    \mathcal{S}_N
    =
    \bigl\{x\in\mathbb{R}^3:|x|=N\bigr\},
    \qquad
    X_N
    =
    \mathcal{S}_N\cap\mathbb{Z}^3.
\]
Since dilation by $N^{-1}$ maps $X_N$ into the unit sphere and preserves additive energy, Theorem \ref{main-thm} gives,  uniformly in $N$,
\begin{equation}
\label{lattice-count}
    E(X_N)\ll_{\e}(\#X_N)^{2+\e}.
\end{equation}
In particular, this estimate appears to be new when the lattice is exceptionally sparse, for example when
\[
    \#X_N<N^\eta
\]
for arbitrarily small $\eta>0$.
This case is not effectively handled by the Bourgain--Demeter decoupling theorem \cite{Bourgain-Demeter}, which yields only $E(X_N)\ll_\e N^\e(\# X_N)^2$, a bound weaker than \eqref{lattice-count}.
We remark that the number of lattice points on $\mathcal{S}_N$ fluctuates substantially with $N$: for almost all integer radii $N$, one has $\#(\mathcal{S}_N\cap\mathbb{Z}^3)=N^{1+o(1)}$, whereas, when $N$ is a power of $2$, the sphere contains exactly the six axial lattice points.

\smallskip 

More generally, let $Q$ be a nondegenerate quadratic polynomial with positive-definite quadratic part and negative minimum, so that $Z(Q)$ is a nonempty ellipsoid. Ellipsoids arise naturally in the study of eigenfunctions of the Laplacian on flat tori.
When $Z(Q)$ is the unit sphere, the corresponding torus is the standard torus $\ZR^3/\ZZ^3$. See \cite{GRP} for further discussion.

\bigskip

\subsection{Examples}
\label{example-section}
We only calculate $E_3$ for the examples.
In each example, the selected points lie in a smooth part of the surface with positive-definite second fundamental form.

\medskip 

{\bf 1. $Z(x^2+y^d-z)$.} For $d\ge 3$, consider the grid
\[
    X
    =
    \left\{
    \left(\frac{i}{N^d},
    \frac{j}{N^2},
    \frac{i^2+j^d}{N^{2d}}
    \right):
    1\le i\le N^d,\quad
    1\le j\le N^2
    \right\}.
\]
Then $\# X\asymp N^{d+2}$ and
\[
    \#(X+X+X)\ll N^dN^2N^{2d} = N^{3d+2}\ \Longrightarrow\ E_3(X)\gg (\#X)^{3+\frac{4}{d+2}}.
\] 

\medskip 

{\bf 2. $Z((y+x^d)^2-z)$.} 
For $d\ge 2$, choose
\[
    X=\left\{
    \left(
    \frac{i}{N},
    \frac{j-i^d}{N^d},
    \frac{j^2}{N^{2d}}
    \right) :N\leq i<2N,\quad N^d\leq j<2N^d \right\}.
\]
Then $\#X \asymp N^{d+1}$
and
\[
    \#(X+X+X)\ll  N^{3d+1}\ \Longrightarrow\ E_3(X)\gg (\#X)^{3+\frac{2}{d+1}}.
\]

\medskip

{\bf 3. $Z(x^3+y^3-z)$.} Choose
\[
    X = \left\{
    \left(
    \frac{i}{N},
    \frac{j}{N},
    \frac{i^3+j^3}{N^3}
    \right):
    N  \leq i,j\leq 2N \right\}.
\]
Then $\# X\asymp N^2$ and
\[
    \#(X+X+X)\ll N^5\ \Longrightarrow\ E_3(X)\gg (\#X)^{3+\frac{1}{2}}.
\]

\bigskip 

\subsection{Outline of the proof}
Define 
\[
    m_F:=\deg F.
\]
At a cost of $O(m_F)$, we may assume that $X\subset \Sigma$, where $\Sigma$ is the graph of a function $f(x,y)$ on an appropriate set $\mathcal G$ (see Lemma \ref{lem: graph cover}).
Our proof uses polynomial partitioning in the plane to realize the following iteration formula: 
For any natural number $D\geq2$, 
\begin{equation}
\label{iteration-intro}
    E(X)\ll_{m_F} \Lambda_{F}(X)D^2(\#X)^2+D^4\sup_{\substack{X'\subset X,\\
    \#X'\ll D^{-2}\#X}}E(X').
\end{equation}
With an appropriate choice of $D$, we can iterate \eqref{iteration-intro} to prove Theorem \ref{main-thm}.

\smallskip

To be more precise, let $\pi$ be the projection onto the $xy$-plane.
By running the polynomial partitioning in the plane (see Theorem \ref{thm:planar-partition}) for $\pi(X)$ and then lifting it back to the surface $\Sigma$, we obtain a polynomial $q$ of degree at most $D$ and the induced cell decomposition
\[
    \Sigma=\Omega\sqcup\big(\bigsqcup_j\Sigma_j\big)
\]
obeying the following
\begin{enumerate}
    \item $\#\{j: \Sigma_j\not=\varnothing\}\ll D^2$, and $\#X_j\ll D^{-2}\#X$, where $X_j:=X\cap \Sigma_j$.
    \item $\Omega$ is contained in a union of algebraic curves on $\Sigma$ with total degree $\ll_{m_F}D$.
\end{enumerate}

Let $Y=\sqcup X_j$ and $W=\Omega\cap X$ denote the cell and wall portion of $X$, respectively.
Note that for every $t\in\Sigma-\Sigma$ outside the exceptional set $\mathcal T_F$ given in \eqref{translation-stabilizer}, the set $\Gamma_t=Z(F(x), F(x-t))$ is an algebraic curve of degree $O_{m_F}(1)$, which encodes the pairs $(x,y)\in\Sigma^2$ such that $x-y=t$, as $\Gamma_t\supset\{x\in\Sigma: x-t\in\Sigma\}$.
Thus, by Theorem \ref{thm:curve-cutting}, there are $\ll_{m_F}D$ pairs of cells $(X_i, X_j)$ such that $d_{X_i-X_j}(t)\not=0$ (see \eqref{pairs-count}).
This implies that, for $t\not\in\cT_F$,
\begin{equation}
\label{sum-after-poly-intro}
    d_{Y}(t)^2=\Big(\sum_{i,j}d_{X_i-X_j}(t)\Big)^2\ll_{m_F}D\,\sum_{i,j}d_{X_i-X_j}(t)^2.
\end{equation}
Summing over $t$ with $d_{X}(t)>0$ so that
\begin{align}
\nonumber
    E(Y)\, &\ll_{m_F} \sum_{t\in\cT_F}d_Y(t)^2+D\,\sum_{t\not\in\cT_F}\sum_{i,j}d_{X_i-X_j}(t)^2\\ \label{sum-after-poly2-intro}
    &\ll  \Lambda_{F}(X)(\#X)^2+D\,\sum_{t\not\in\cT_F}\sum_{i,j}d_{X_i-X_j}(t)^2.
\end{align}
In the second inequality, we use Corollary \ref{cor:difference curves}.

Using the identity \eqref{useful-identity}, we have
\[
    D\,\sum_{t\not\in\cT_F}\sum_{i,j}d_{X_i-X_j}(t)^2\ll D\,\sum_t\Big(\sum_j d_{X_j}(t)\Big)^2.
\]
Similar to \eqref{sum-after-poly-intro}, we have
\[
    \sum_t\Big(\sum_j d_{X_j}(t)\Big)^2\ll \Lambda_{F}(X)(\#X)^2+D\,\sum_{t\not\in\cT_F}\sum_{i}d_{X_i}(t)^2.
\]
Plug these two estimates back into \eqref{sum-after-poly2-intro} to conclude
\[
    E(Y)\ll_{m_F} \Lambda_{F}(X)D(\#X)^2+D^4\sup_{\substack{X'\subset X,\\
    \#X'\ll D^{-2}\#X}}E(X').
\]

\medskip

Next, we consider the wall portion $W$ by finding an upper bound for 
\[
    E(X,W)=\sum_t d_X(t)d_W(t),
\] 
We only bound $d_W(t)$.
In Lemma \ref{lem: wall energy}, we show that $d_W(t)=O_{m_F}(D^2)$ for most $t$, while the remaining values of $t$ contribute a total of $\Lambda_{F}(X)D(\#X)^2$.
Thus, 
\[
    E(X,W)\ll_{m_F}\Lambda_{F}(X)D^2(\#X)^2.
\]

Since $X=Y\cup W$, combining the cellular contribution and the wall contribution, we end up with
\[
    E(X)\ll E(X,W)+E(Y)\ll_{m_F} \Lambda_{F}(X)D^2(\#X)^2+D^4\sup_{\substack{X'\subset X,\\
    \#X'\ll D^{-2}\#X}}E(X').
\]
This is \eqref{iteration-intro}.

\bigskip

\subsection{Communication with AI}
During the development of this work, the authors used  GPT-5.6 to explore a polynomial-partitioning approach to the unit-sphere case. 
The model suggested the initial structure of the spherical argument. 
The authors subsequently reconstructed and independently verified the argument, established its extension to general algebraic surfaces, and wrote the manuscript.

\bigskip

\noindent {\bf Notation.}
We write $A\ll B$ to mean that $|A|\leq C B$ for some constant $C>0$. 
A subscript indicates the parameters on which the implicit constant may depend; for example, $A\ll_{\varepsilon} B$ means that $C$ may depend on $\varepsilon$.
Unless otherwise stated, all implicit constants are absolute.
For a finite set $X$, we use $\# X$ to denote its cardinality.

\bigskip

\noindent {\bf Acknowledgements.} 
The first author is partially supported by the NSF grant DMS-2503063.
The second author is partially supported by the NSF grant DMS-2453583 and thanks Ciprian Demeter for helpful comments on the introduction.

\bigskip

\section{Preliminary results}

\subsection{Algebraic inputs}

In this subsection, we collect several algebraic results used in the proof. 
All are well known and are stated without proof.

\medskip

The first one is the Guth--Katz polynomial partitioning.

\begin{theorem}[\cite{Guth-Katz}, Theorem 4.1]\label{thm:planar-partition}
There is an absolute constant $\alpha\ge1$ such that, for every finite $Y\subset\R^2$ and every integer $D\ge1$, there is a nonzero polynomial $q\in\R[u,v]$ of degree at most $D$ for which every connected component of $\R^2\setminus Z(q)$ contains at most
\[
 \alpha\frac{\# Y}{D^2}
\]
points of $Y$, and $b_0(\R^2\setminus Z(q))\leq \alpha D^2$, where $b_0$ is the number of semialgebraically connected components.  
\end{theorem}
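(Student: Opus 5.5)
We would prove the planar polynomial partitioning theorem in the standard Guth--Katz way: iterate polynomial ham sandwich cuts. The basic tool is the polynomial ham sandwich theorem, obtained from the Borsuk--Ulam theorem. Given finite sets $S_1,\dots,S_M\subset\R^2$, there is a nonzero polynomial $p$ of degree $\ll M^{1/2}$ that bisects each $S_i$. Here ``bisects'' means each of $\{p>0\}$ and $\{p<0\}$ contains at most $\#S_i/2$ points of $S_i$.

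To prove this, note that the space of polynomials of degree $\le d$ in two variables has dimension $\binom{d+2}{2}$. We choose $d$ minimal with $\binom{d+2}{2}-1\ge M$, so $d\ll M^{1/2}$, and identify the unit sphere $S^{M}$ with a subset of the coefficient space. The map
\[
p\mapsto\bigl(\#\{y\in S_i:p(y)>0\}-\#\{y\in S_i:p(y)<0\}\bigr)_{i\le M}
\]
is odd. It is not continuous for finite sets, so we first replace each point by a small ball and bisect the resulting measures. Borsuk--Ulam then gives a zero of this odd map, and a limiting argument as the radii shrink (using compactness of the sphere) passes back to finite sets.

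Next we iterate. Set $p_1$ to bisect $Y$. At step $k$ we have $2^{k-1}$ sets, namely the points of $Y$ off $Z(p_1\cdots p_{k-1})$ with a given sign pattern. We bisect all of them with one polynomial $p_k$ of degree $\ll 2^{k/2}$. After $J$ steps, each sign-pattern class contains at most $2^{-J}\#Y$ points. The total degree is $\sum_{k\le J}2^{k/2}\ll 2^{J/2}$. We choose $J$ with $2^{J/2}\sim D/C$ and put $q=\prod_k p_k$, so that $\deg q\le D$ and each sign class has $\ll \#Y/D^2$ points. Every connected component of $\R^2\setminus Z(q)$ lies inside a single sign class, since no $p_k$ vanishes on it. This gives the bound $\alpha\#Y/D^2$ per component.

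The remaining claim is $b_0(\R^2\setminus Z(q))\le\alpha D^2$. For this we invoke the Oleinik--Petrovskii--Milnor--Thom bound: the complement of the zero set of a degree-$D$ polynomial in $\R^2$ has $\ll D^2$ connected components. One way to see it is to note that each bounded component of $\{q\ne0\}$ contains a critical point of $q^2$ after a generic perturbation. Bezout applied to $\partial_uq=\partial_vq=0$ then bounds such points by $(D-1)^2$, and the unbounded components are $\ll D$.

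I expect the main technical obstacle to be the continuity/limiting step in the ham sandwich theorem for finite point sets. The Milnor--Thom count is the other delicate point, since it needs a perturbation argument to handle degenerate or non-reduced $q$. Both are classical, and in the paper we would simply cite \cite{Guth-Katz}.
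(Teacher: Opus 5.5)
Your proposal is correct. It is the standard Guth--Katz argument that the paper relies on: the Stone--Tukey/Borsuk--Ulam polynomial ham sandwich with the small-ball limiting step, iterated bisection of the $2^{k-1}$ sign classes, and $q=\prod_k p_k$, together with the Milnor--Thom/Warren bound $b_0(\R^2\setminus Z(q))\ll D^2$. The paper states this theorem without proof and cites \cite{Guth-Katz}, so your route matches. One small point of precision: each bounded component already contains a critical point of $q$ (an interior extremum of $|q|$) without any perturbation; the perturbation is only needed to make the critical set finite so that B\'ezout applies (e.g.\ $q=(u^2+v^2-1)^2-\tfrac14$ has a whole circle of critical points off $Z(q)$).
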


The next result is a generalized affine B\'ezout theorem over the field $\R$.
See \cite[Chapter~4]{BasuPollackRoy2006}. 
Here and throughout the remainder of the paper, if $C$ is an affine algebraic curve defined over $\R$, we write $C(\R)$ for its set of real points. 

\begin{theorem}[B\'ezout]\label{thm: Bezout}
Let $P,Q\in\R[x_1,x_2,x_3]$ be nonconstant and coprime, with degrees $d_1$ and $d_2$, respectively.
There is a possibly empty family of irreducible affine algebraic curves $C_1,\dots,C_r$ defined over $\R$ such that
\begin{equation}
\label{eq:bezout-decomposition}
    Z(P,Q)=\bigcup_{i=1}^r C_i(\R),
\end{equation}
and $\sum_{i=1}^r\deg C_i\leq d_1d_2$.
In particular, if $C_1$ and $C_2$ are distinct irreducible affine algebraic curves defined
over $\R$, then
\begin{equation}
\label{eq:curve-bezout}
    \#\bigl(C_1(\R)\cap C_2(\R)\bigr)  \leq(\deg C_1)(\deg C_2).
\end{equation}
\end{theorem}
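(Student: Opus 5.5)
Theorem~\ref{thm: Bezout} is a standard fact that the paper quotes without proof, so I would reduce it to two classical ingredients. The first is the dimension theory of $Z(P,Q)$ over $\mathbb{C}$. The second is the projective B\'ezout theorem (or, more elementarily, a resultant/generic-projection argument).

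\textbf{Step 1: the variety $V=Z_{\mathbb{C}}(P,Q)$ is a curve.} Since $P$ and $Q$ are coprime in $\R[x_1,x_2,x_3]$, they are also coprime in $\mathbb{C}[x_1,x_2,x_3]$: a common factor over $\mathbb{C}$ can be replaced by the product with its complex conjugate, giving a real common factor. Hence no irreducible surface lies in both $Z_{\mathbb{C}}(P)$ and $Z_{\mathbb{C}}(Q)$. By Krull's principal ideal theorem, every irreducible component of $V$ then has dimension exactly $1$.

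\textbf{Step 2: the degree bound.} Homogenize $P$ and $Q$ and work in $\mathbb{P}^3$. The projective B\'ezout inequality for a proper intersection of two hypersurfaces gives
\[
\sum_i \operatorname{mult}_i\,\deg \overline{V_i}=d_1d_2 ,
\]
where the sum runs over the one-dimensional components of the intersection. Some components may lie in the plane at infinity; discarding them only decreases the sum. So the affine complex components $V_i$ satisfy $\sum\deg V_i\le d_1d_2$.

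\textbf{Step 3: passing to real curves.} The real points of $V$ are invariant under complex conjugation. I would group the components as follows:
\begin{itemize}
\item[(a)] components defined over $\R$, which remain irreducible over $\R$;
\item[(b)] conjugate pairs $\{V_i,\overline{V_i}\}$ with $V_i\neq\overline{V_i}$.
\end{itemize}
For a pair in (b), every real point of $V_i$ lies in $V_i\cap\overline{V_i}$, which is a finite set. I would include such finitely many real points as degree-one ``curves'' (points), or treat them as degenerate components. This is where the statement is slightly informal, since isolated real points must be allowed.

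The union of the real-defined components together with the conjugate-pair unions gives~\eqref{eq:bezout-decomposition}. Its total degree is at most $\sum_i\deg V_i\le d_1d_2$, so the bound is preserved.

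\textbf{Step 4: the intersection bound \eqref{eq:curve-bezout}.} Take a generic linear projection $\pi:\R^3\to\R^2$, chosen so that:
\begin{itemize}
\item[(i)] $\pi$ is injective on the finite set $C_1\cap C_2$;
\item[(ii)] $\pi(C_1)\neq\pi(C_2)$;
\item[(iii)] $\pi$ preserves the degrees of $C_1$ and $C_2$.
\end{itemize}
Then $\pi(C_1)$ and $\pi(C_2)$ are distinct plane curves. The plane B\'ezout theorem bounds their intersection by $(\deg C_1)(\deg C_2)$, and this bounds $\#(C_1(\R)\cap C_2(\R))$.

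\textbf{Main obstacle.} The main difficulty is Step 3: the bookkeeping for non-real components and isolated real points, together with making sure ``degree'' is measured consistently. I would handle this by defining $\deg C$ as the degree of the complex Zariski closure.
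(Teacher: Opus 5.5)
The paper gives no proof of this statement. It is listed among the ``well known'' algebraic inputs and only attributed to Basu--Pollack--Roy, Ch.~4, so there is no internal argument to compare with. Your outline is a standard and valid proof: dimension count over $\mathbb{C}$, projective B\'ezout for the degree sum, grouping complex components into conjugation orbits, then generic projection plus plane B\'ezout for the point count. It also makes clear which reading of the statement is the correct one. A few points need tightening.

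\textbf{Step 3.} Discard the option of adding isolated real points as separate degree-one pieces, because it breaks the degree bound. For example, $P=z$ and $Q=\prod_{j=1}^k(x-j)^2+\prod_{j=1}^k(y-j)^2$ are coprime with $d_1d_2=2k$. Yet $Z(P,Q)$ consists of $k^2$ isolated real points, and $k^2>2k$ for $k\ge3$.

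The option that works is the one you actually use at the end. Read ``irreducible curve defined over $\R$'' as irreducible over $\R$. Then $V_i\cup\overline{V_i}$ is itself such a curve: it is conjugation-stable, hence cut out by real polynomials, and any real-defined closed set containing $V_i$ also contains $\overline{V_i}$. With $\deg$ meaning the degree of the complex Zariski closure, this curve has degree $2\deg V_i$. Its real locus is finite, but no degenerate components are needed, and the total degree is exactly $\sum_i\deg V_i\le d_1d_2$.

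\textbf{Step 4.} Under this reading, the $C_1,C_2$ in the second part may be geometrically reducible. So condition (ii) should require that $\overline{\pi(C_1)}$ and $\overline{\pi(C_2)}$ share no component, not merely that they differ. This holds for a generic direction because distinct $\R$-irreducible curves share no complex component: a shared component would bring its conjugate along, forcing $C_1=C_2$. Also choose the direction off the points at infinity of $\overline{C_1}\cup\overline{C_2}$. Then $\pi$ is finite on each $C_i$ and $\deg\overline{\pi(C_i)}\le\deg C_i$. A real such direction exists because the bad directions form a proper Zariski-closed set.

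\textbf{Smaller points.} In Step 1, if a common complex factor is already real up to a scalar, use it directly rather than $g\bar g$. In Step 2, note that the homogenizations remain coprime because $x_0$ divides neither.
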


We also use the following one-dimensional case of the sign-condition theorem to bound the number of pieces into which one algebraic curve is cut by another, see  \cite[Theorem 1.1]{BaroneBasu2012}.
\begin{theorem}
\label{thm:curve-cutting}
For every integer $e\geq1$, there is a constant $C_e$ with the following property.  
Let $P_1, P_2\in\R[x_1, x_2, x_3]$ be nonzero and coprime, with $\deg P_1, \deg P_2\leq e$, and let $H_1,H_2\in\R[x_1,x_2, x_3]$ have degrees at most $D$.  
Then
\begin{equation}
\nonumber
    b_0 \left( Z(P_1, P_2)\setminus \bigl( Z(H_1)\cup Z(H_2)\bigr)\right)
    \leq C_e(D+1),
\end{equation}
where $b_0$ is the number of semialgebraically connected components.
\end{theorem}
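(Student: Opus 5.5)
The plan is to reduce the count to a one-dimensional situation: describe $Z(P_1,P_2)$ as a bounded-degree real curve plus boundedly many isolated points, and then control how many new connected components appear when we delete its intersection with $H:=H_1H_2$, which has degree $\le 2D$. Note $Z(H_1)\cup Z(H_2)=Z(H)$.

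\textbf{Step 1: structure of $Z(P_1,P_2)$.} Since $P_1,P_2$ are coprime, Theorem~\ref{thm: Bezout} writes $Z(P_1,P_2)$ as a union of real points of irreducible curves $C_1,\dots,C_r$ with $\sum_i\deg C_i\le e^2$. In particular $r\le e^2$.

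\textbf{Step 2: the removed set is finite.} Split the components by whether they lie in $Z(H)$.
\begin{itemize}
\item If $C_i(\R)\subset Z(H)$, the component is removed entirely and contributes nothing.
\item Otherwise $C_i(\R)\cap Z(H)$ is a finite set $S_i$.
\end{itemize}
I expect to bound $\#S_i$ by a refined B\'ezout inequality for an irreducible curve against a surface it does not lie in: $\#S_i\le(\deg C_i)\cdot 2D$. This is the version in Fulton's intersection theory, or the degree bounds in \cite[Chapter~4]{BasuPollackRoy2006}. Summing over $i$ gives at most $2e^2D$ removed points in total.

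\textbf{Step 3: the original set has boundedly many components.} By Milnor--Thom / Oleinik--Petrovskii, $b_0(Z(P_1,P_2))\ll_e 1$.

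\textbf{Step 4: effect of each deleted point.} Deleting a point $p$ from a one-dimensional semialgebraic set increases $b_0$ by at most $h(p)-1$, where $h(p)$ is the number of half-branches through $p$ (by the local conic structure theorem). For a point on curves of total degree $\le e^2$, $h(p)$ is at most twice the sum of multiplicities of $p$ on the $C_i$. That sum is $\le e^2$, so $h(p)\le 2e^2$. Isolated real points of $Z(P_1,P_2)$ that lie in $Z(H)$ simply disappear; the others persist and are already counted in Step 3.

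\textbf{Step 5: assemble the bound.} Combining Steps 2--4,
\[
b_0\bigl(Z(P_1,P_2)\setminus Z(H)\bigr)\;\le\; O_e(1)+2e^2\cdot 2e^2D\;\le\; C_e(D+1).
\]

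The main obstacle is Step 2. One must make sure the B\'ezout count applies to real points of a real irreducible curve cut by a hypersurface, and that the possibly non-reduced or embedded structure of $Z(P_1,P_2)$ does not inflate the count. If the cleanest route is unavailable, my fallback is a generic linear projection $\R^3\to\R^2$ that is injective on the $C_i$ away from finitely many points. This turns $C_i$ into a plane curve, and the intersection count becomes a resultant computation in one variable of degree $\ll_e D$.

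Alternatively, the whole statement follows directly from the Barone--Basu bound $\sum_\sigma b_0(\text{realization of }\sigma)\ll_e D^{\dim}$ with $\dim=1$, applied to the sign conditions of $\{H_1,H_2\}$ on $Z(P_1,P_2)$. So one may simply cite \cite[Theorem 1.1]{BaroneBasu2012}, as the paper does.
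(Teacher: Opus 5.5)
Your proposal is correct, but your main argument is not the paper's route. The paper states this theorem without proof and simply cites \cite[Theorem 1.1]{BaroneBasu2012}, which is the sign-condition bound in your final remark. There, $Z(P_1,P_2)$ has dimension $\le 1$ and is cut out in degree $\le e$, and $H_1,H_2$ have degree $\le D$. The set in question is the union of the realizations of the four sign conditions in $\{-1,+1\}^2$, so its $b_0$ is at most their sum, which is $\ll_e D+1$.

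Your primary route is a self-contained, dimension-one argument built from three ingredients:
\begin{itemize}
\item The B\'ezout inequality over $\mathbb{C}$, for an irreducible curve $C\not\subset V(H)$ against the hypersurface $V(H)$, bounds the deleted set $S$ by $2e^2D$ points. This is standard rather than an obstacle. You count points set-theoretically, so non-reducedness plays no role, and the fallback projection is unnecessary.
\item Milnor--Thom bounds $b_0(Z(P_1,P_2))$ by $O_e(1)$.
\item Local conic structure, together with $h(p)\le 2\sum_i \mathrm{mult}_p C_i\le 2e^2$, bounds how many new components each deleted point can create.
\end{itemize}
This buys an explicit constant of order $e^4$ and shows transparently why the bound is linear in $D$. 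The price is invoking facts about real branches and multiplicities. The citation, by contrast, is one line and works in any dimension.

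One piece of bookkeeping is missing from your Steps 3--4. A component with $C_k(\R)\subset Z(H)$ is removed wholesale, not point by point, so the deleted-point count of Step 4 does not by itself cover it. Here is the fix. Let $Z'$ be the union of the surviving $C_i(\R)$. Then $Z(P_1,P_2)\setminus Z(H)=Z'\setminus S$. Any point of $Z'$ lying on a removed component is in $Z'\cap Z(H)=S$. Hence $Z'\setminus S$ and the remaining part of $Z(P_1,P_2)\setminus S$ are disjoint relatively closed sets, so $Z'\setminus S$ is clopen in $Z(P_1,P_2)\setminus S$. Therefore $b_0(Z'\setminus S)\le b_0(Z(P_1,P_2)\setminus S)\le b_0(Z(P_1,P_2))+\sum_{p\in S}h(p)$, which is exactly the bound your Step 5 assembles.
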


\bigskip 

\subsection{Technical lemmas}

Recall that we write $m_F=\deg F$. 
We define the translation stabilizer of $F$ by
\begin{equation}
\label{translation-stabilizer}
    \mathcal T_F:=\{t\in\R^3:F(x-t)=F(x)\text{ for every }x\in\R^3\}.
\end{equation}

\begin{lemma}
\label{difference-curve-lem}
Let $F:\ZR^3\to\ZR$ be a polynomial that is irreducible over $\R$ with $\deg F\geq2$, $Z(F)\neq\varnothing$, and let $\cT_F$ be given by \eqref{translation-stabilizer}.
Then the set $\mathcal T_F$ is a linear subspace of $\R^3$ of dimension at most one. 
Also, for $t\notin\mathcal T_F$, $F(x)$ and $F(x-t)$ are coprime, and
\begin{equation}
\label{eq:def-spatial-difference-curve}
    \Gamma_t:=Z\bigl(F(x),F(x-t)\bigr)
\end{equation}
is a union of irreducible affine algebraic curves $C_{1},\dots,C_{r_t}$ satisfying
\begin{equation}
\label{eq:spatial-difference-degree}
    \sum_{i=1}^{r_t}\deg C_{i}\leq m_F^2.
\end{equation}
Moreover, if $\mathcal T_F\neq\{0\}$, then every affine coset $x+\mathcal T_F$ through a point $x\in Z(F)$ is an affine line contained in $Z(F)$.  
\end{lemma}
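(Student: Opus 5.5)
Four claims need proof: $\mathcal T_F$ is a subspace of dimension at most one; $F(x)$ and $F(x-t)$ are coprime when $t\notin\mathcal T_F$; the degree bound for $\Gamma_t$; and the line property of the cosets.

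\textbf{$\mathcal T_F$ is a subspace.} Write $F=\sum_{k\le m}F_k$ with $F_k$ homogeneous of degree $k$, $m=m_F$, $F_m\neq0$.
\begin{itemize}
\item $\mathcal T_F$ is a group under addition, since $F(x-s-t)=F(x-s)=F(x)$ for $s,t\in\mathcal T_F$.
\item It is Zariski closed, because the condition $F(x-t)=F(x)$ for all $x$ amounts to polynomial equations in $t$ (the coefficients in $x$ vanish).
\item If $t\in\mathcal T_F$, then $F(x-nt)=F(x)$ for every $n\in\mathbb Z$. For fixed $x$, the map $\lambda\mapsto F(x-\lambda t)-F(x)$ is a polynomial vanishing on $\mathbb Z$, hence identically zero. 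So $\R t\subset\mathcal T_F$.
\end{itemize}
A closed additive subgroup that contains every line through its points is a linear subspace.

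\textbf{Dimension at most one.} Differentiating $F(x-\lambda t)=F(x)$ in $\lambda$ gives $t\cdot\nabla F\equiv0$. In particular the top form satisfies $t\cdot\nabla F_m\equiv0$, so $F_m$ is invariant along $t$.
\begin{itemize}
\item If $\dim\mathcal T_F\ge2$, then $F$ depends on only one linear coordinate, say $F=g(\ell(x))$ with $g$ a univariate polynomial of degree $m\ge2$.
\item Over $\R$, such a $g$ factors unless $m=2$ and $g$ has no real root. In that case $Z(F)=\varnothing$, which contradicts the hypothesis.
\item If $g$ has a real root, then $g$ has a real linear factor, so it is reducible since $m\ge2$.
\end{itemize}
Hence $\dim\mathcal T_F\le1$. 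This step uses irreducibility, $\deg F\ge2$, and $Z(F)\neq\varnothing$ together.

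\textbf{Coprimality.} Suppose $F(x)$ and $F(x-t)$ share a factor. Both are irreducible of the same degree, so $F(x-t)=cF(x)$ for some constant $c$. Comparing top-degree parts gives $F_m(x)=cF_m(x)$, so $c=1$ and $t\in\mathcal T_F$. Thus $t\notin\mathcal T_F$ forces coprimality.

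\textbf{Degree bound.} Theorem~\ref{thm: Bezout} applied to $P=F(x)$ and $Q=F(x-t)$, both of degree $m_F$, gives the decomposition of $\Gamma_t$ into irreducible curves with $\sum_i\deg C_i\le m_F^2$.

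\textbf{Cosets are lines in $Z(F)$.} If $\mathcal T_F=\R t\neq\{0\}$ and $x\in Z(F)$, then $F(x-\lambda t)=F(x)=0$ for every $\lambda$. So the affine line $x+\mathcal T_F$ lies in $Z(F)$.

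The step needing the most care is the dimension bound, since it is where all three hypotheses enter; the rest is routine.
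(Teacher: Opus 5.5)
Your proof is correct and follows the paper's argument essentially step for step. Both use the periodicity argument to get $\R t\subset\mathcal T_F$, reduce to a univariate polynomial when $\dim\mathcal T_F\ge2$ so that irreducibility, $\deg F\ge2$ and $Z(F)\neq\varnothing$ clash, compare top homogeneous parts for coprimality, and apply B\'ezout for the degree bound. The Zariski-closedness and the identity $t\cdot\nabla F\equiv0$ you mention are not needed, since an additive subgroup closed under real scalar multiplication is already a linear subspace.
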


\begin{proof}
Suppose first that $t\in\mathcal T_F$.  By the definition of $\mathcal T_F$, for any fixed $x$, the polynomial
\[
    g_x(\tau)=F(x-\tau t)
\]
is $1$-periodic. 
Indeed for each $n\in\mathbb Z$, $g_x(\tau + n ) = F(x-(\tau+n-1)t-t)=F(x-(\tau+n-1)t)$ as $t\in\mathcal T_F$. 
This implies that $g_x$ is a constant polynomial. 
Hence $F(x-\tau t)=F(x)$ for every $\tau\in\R$. 
It is straightforward to verify that $\mathcal T_F$ is closed under real scalar multiplication and addition, and hence it is a linear subspace.

Assume, for contradiction, that $\dim\mathcal T_F\geq2$.
Then $\mathcal T_F$ contains at least two linearly independent vectors. 
Make an invertible linear change of coordinates of $\R^3$ so that $F$ is invariant in the first two coordinate directions. 
Therefore, the first two corresponding directional derivatives of $F$ vanish, and we may write $F(y_1,y_2,y_3)=G(y_3)$ for a univariate polynomial $G$.
Since $Z(F)\not=\varnothing$, the univariate polynomial $G$ has a real root. 
Because $F$, and hence $G$, is irreducible over $\ZR$, this forces $G$ to be linear, contradicting $\deg F\geq 2$.
Therefore $\dim\mathcal T_F\leq1$.

\smallskip

Next, let $t\notin\mathcal T_F$. 
Assume first that $F(x)$ and $F(x-t)$ are not coprime. 
As they are both irreducible, by the degree comparison, we have $F(x-t)=cF(x)$ for some nonzero $c\in\R$. 
On the other hand, translation does not change the highest homogeneous part, so comparison of highest homogeneous terms gives $c=1$. 
This implies that $t\in\mathcal T_F$, which contradicts the assumption. 
Applying Bezout's theorem (Theorem~\ref{thm: Bezout}) we proved \eqref{eq:spatial-difference-degree}.

\smallskip 

Finally suppose that $\mathcal T_F\neq\{0\}$. As $\mathcal T_F$ is a linear subspace, for each $x\in Z(F)$, clearly $x+\mathcal T_F\subseteq Z(F)$. 
Indeed, for each nonzero $t\in \mathcal T_F$, we have $F(x-t)=F(x)=0$ whenever $x\in Z(F)$. 
\end{proof}

\begin{corollary}
\label{cor:difference curves}
Let $\mathcal T_F$ be as in Lemma \ref{difference-curve-lem}. 
Then every finite $X\subset Z(F)$ satisfies
\begin{equation}
\label{eq:stabilizer-energy}
    \sum_{t\in\mathcal T_F}d_{X}(t)^2\leq \Lambda_{F}(X)(\#X)^2.
\end{equation}
\end{corollary}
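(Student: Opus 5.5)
By Lemma~\ref{difference-curve-lem}, $\mathcal T_F$ is a linear subspace of dimension at most one, so there are two cases.

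\emph{Case $\mathcal T_F=\{0\}$.} The sum reduces to the single term $d_X(0)^2=(\#X)^2$. Since $\Lambda_F(X)\geq1$, this is at most $\Lambda_F(X)(\#X)^2$.

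\emph{Case $\dim\mathcal T_F=1$.} The last part of Lemma~\ref{difference-curve-lem} shows that for each $x\in X\subset Z(F)$ the coset $x+\mathcal T_F$ is an affine line contained in $Z(F)$. These cosets partition $X$ into disjoint pieces $X_1,\dots,X_k$, each lying on a single line in $Z(F)$. By the definition of $\Lambda_F(X)$, each piece satisfies $\#X_i\leq\Lambda_F(X)$.

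If $t\in\mathcal T_F$ and $x-y=t$ with $x,y\in X$, then $x$ and $y$ lie in the same coset. Hence
\[
\sum_{t\in\mathcal T_F}d_X(t)=\#\{(x,y)\in X^2:\ x-y\in\mathcal T_F\}=\sum_i(\#X_i)^2\leq\Lambda_F(X)\sum_i\#X_i=\Lambda_F(X)\#X .
\]
Combining this with the trivial bound $d_X(t)\leq\#X$ gives
\[
\sum_{t\in\mathcal T_F}d_X(t)^2\leq\Big(\max_t d_X(t)\Big)\sum_{t\in\mathcal T_F}d_X(t)\leq \#X\cdot\Lambda_F(X)\#X=\Lambda_F(X)(\#X)^2 ,
\]
which is \eqref{eq:stabilizer-energy}.

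Neither case presents a real obstacle. The only point needing care is that the cosets are genuine affine lines contained in $Z(F)$, so that $\Lambda_F(X)$ controls their intersections with $X$, and this is supplied directly by Lemma~\ref{difference-curve-lem}.
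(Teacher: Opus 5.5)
Your proposal is correct and follows essentially the same argument as the paper: partition $X$ by cosets of $\mathcal T_F$, bound $\sum_{t\in\mathcal T_F}d_X(t)=\sum_L(\#X_L)^2\leq\Lambda_F(X)\,\#X$, and then use $d_X(t)\leq\#X$. The only difference is cosmetic. You treat $\mathcal T_F=\{0\}$ as a separate case, whereas the paper absorbs it into the same computation using singleton cosets.
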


\begin{proof}
We may partition $X$ among the cosets of $\mathcal T_F$ that meet $X$, and write $X=\bigsqcup_L X_L$. 
Note that if $x-y=t\in \mathcal T_F$, then $x$ and $y$ live in the same $\mathcal T_F$ coset.
Therefore
\[
    \sum_{t\in\mathcal T_F}d_{X}(t)
    =\sum_L|X_L|^2
    \leq\Lambda_{F}(X)\sum_L|X_L|
    =\Lambda_{F}(X)(\#X).
\]
The above inequality is also valid when $\mathcal T_F=\{0\}$ with singleton cosets. 
Since $d_{X}(t)\leq(\#X)$ for every $t$, we obtain
\[
    \sum_{t\in\mathcal T_F}d_{X}(t)^2
    \leq (\#X)\sum_{t\in\mathcal T_F}d_X(t)
    \leq\Lambda_{F}(X)(\#X)^2. \qedhere
\]

\end{proof}

\medskip

\begin{lemma}
\label{lem: graph cover}
Let $F:\ZR^3\to\ZR$ be a polynomial that is irreducible over $\ZR$, and let $F_m$ be the homogeneous part of $F$ of degree $m:=m_F$.  
There is an invertible linear map $L:\R^3\to\R^3$ such that, with $\widetilde F(y):=F(Ly)$, the coefficient of $y_3^m$ in $\widetilde F$ is nonzero. 
Moreover, for every finite $X\subset  Z(F)$,
\begin{equation}
\label{eq:linear-invariance}
    E(L^{-1}X)=E(X), \qquad
    \Lambda_{\widetilde F}(L^{-1}X)=\Lambda_{F}(X).
\end{equation}
In these normalized coordinates given by $L$, there are sets $\Omega_1,\dots,\Omega_m\subset\R^2$ and functions $f_j:\Omega_j\to\R$ such that, with $\mathcal G_j=\{(u,f_j(u)):u\in\Omega_j\}$, one has the disjoint graph decomposition $Z(\widetilde F)=\bigsqcup_{j=1}^m\mathcal G_j$.
Consequently, every finite $X\subseteq  Z(\widetilde F)$ has a disjoint decomposition
\begin{equation}
\label{eq:fiber-rank-decomposition}
    X=\bigsqcup_{i=1}^K X_i,
\end{equation}
with $K\leq m$, and each $X_j$ is contained in a portion $\mathcal G_j$ of $ Z(\widetilde F)$ that is the graph of a function of $\vec{u}=(y_1,y_2)$. In particular, the projection $\pi(y_1,y_2,y_3)=(y_1,y_2)$ is a bijection on every $X_j$.
\end{lemma}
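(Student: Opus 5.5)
The plan has three parts: choose $L$ so that the top homogeneous part is nonvanishing in a suitable direction, check that the energy and line-concentration quantities are invariant, and build the graph pieces by sorting the roots of $\widetilde F$ in the $y_3$ variable.

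\emph{Choosing $L$.} Since $F_m$ is a nonzero homogeneous polynomial, it does not vanish identically on $\R^3$. Hence there is a unit vector $w$ with $F_m(w)\neq0$, because a nonzero polynomial cannot vanish on an open set. Take $L$ to be any invertible linear map with $Le_3=w$. Then $\widetilde F(y)=F(Ly)$ has top homogeneous part $F_m(Ly)$, and its coefficient of $y_3^m$ is $F_m(Le_3)=F_m(w)\neq0$.

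\emph{Invariance.} For the energy, $L^{-1}$ is a linear bijection, so $a+b=c+d$ holds if and only if $L^{-1}a+L^{-1}b=L^{-1}c+L^{-1}d$. Thus $L^{-1}$ induces a bijection of the quadruples counted by $E$, giving $E(L^{-1}X)=E(X)$. For $\Lambda$, the set $Z(\widetilde F)=L^{-1}Z(F)$, and $L^{-1}$ maps the affine lines contained in $Z(F)$ bijectively onto the affine lines contained in $Z(\widetilde F)$. It also preserves the intersection counts $\#(X\cap\ell)$. This gives \eqref{eq:linear-invariance}.

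\emph{Graph decomposition.} Write
\[
\widetilde F(y)=c\,y_3^m+\sum_{k<m}a_k(y_1,y_2)y_3^k, \qquad c\neq 0.
\]
For each fixed $u=(y_1,y_2)$, the map $y_3\mapsto\widetilde F(u,y_3)$ is a polynomial of exact degree $m$. It therefore has at most $m$ distinct real roots $r_1(u)<\dots<r_{k(u)}(u)$, with $k(u)\le m$. For $j=1,\dots,m$, set
\[
\Omega_j=\{u: k(u)\ge j\}, \qquad f_j(u)=r_j(u).
\]
Each point of $Z(\widetilde F)$ has the form $(u,r_j(u))$ for exactly one $j$, so the graphs $\mathcal G_j$ are pairwise disjoint and their union is $Z(\widetilde F)$. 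No regularity of the $f_j$ is claimed, so this step is only a choice of ordering. Now put $X_j=X\cap\mathcal G_j$ and discard the empty ones, leaving $K\le m$ pieces. Since $\mathcal G_j$ is a graph over $\Omega_j$, the projection $\pi$ is injective on $\mathcal G_j$ and hence a bijection from $X_j$ onto $\pi(X_j)$.

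The main obstacle is only the existence of a direction $w$ with $F_m(w)\neq0$, which is immediate from $F_m\not\equiv0$. Irreducibility of $F$ is not actually needed for this lemma.
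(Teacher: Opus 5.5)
Your proposal is correct and matches the paper's proof step for step. The paper also takes $L$ with $F_m(Le_3)\neq 0$, gets invariance from $L$ being a linear bijection that carries lines in $Z(F)$ to lines in $Z(\widetilde F)$, and obtains the graphs $\mathcal G_j$ by ordering the distinct real roots of $y_3\mapsto\widetilde F(u,y_3)$ (your side remark that irreducibility is not used here is also accurate).
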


\begin{proof}
Since $F_m$ is nonzero, choose $e\in\R^3$ with $F_m(e)\neq0$, and choose $L\in\mathrm{GL}_3(\R)$ with $Le_3=e$, where $e_3$ is the vertical unit vector. 
The top homogeneous part of $\widetilde F=F\circ L$ is $F_m\circ L$, so the coefficient of $y_3^m$ is $F_m(Le_3)=F_m(e)\neq0$.

Since $L$ is invertible, it is a group isomorphism and hence a bijection and a Freiman isomorphism of all orders.
In particular, $a+b=c+d$ if and only if $L(a)+L(b)=L(c)+L(d)$.
This implies the first identity in \eqref{eq:linear-invariance}. 
The map $L$ also gives a bijection between affine lines in $ Z(\widetilde F)$ and affine lines in $Z(F)$, preserving the number of selected points. 
This proves the second identity in \eqref{eq:linear-invariance}.

\smallskip

Now for any fixed $u\in\R^2$, the univariate polynomial $g_u(z)=\widetilde F(u,z)$ has degree $m$, with the same nonzero leading coefficient for every $u$. 
Consider distinct real roots of $g_u(z)$
\[
    z_1(u)<\cdots<z_{r(u)}(u).
\]
Clearly, $r(u)\leq m$. For $1\leq j\leq m$, define
\[
    \Omega_j=\{u\in\R^2:r(u)\geq j\},
\]
and $f_j(u)=z_j(u)$ for $u\in\Omega_j$. 
Thus, every real zero of $\widetilde F$ belongs to exactly one of the graphs $\mathcal G_j$. 
For a finite set $X\subset Z(\widetilde F)$, take $X_j=X\cap\mathcal G_j$. 
This gives the disjoint decomposition \eqref{eq:fiber-rank-decomposition}, and each remaining $X_j$ lies in a graph portion of $ Z(\widetilde F)$.
\end{proof}

\medskip

Finally, we state a lemma that is useful to bound the contribution from the wall coming from the polynomial partitioning.

\begin{lemma}
\label{lem: wall energy} 
Let $F:\ZR^3\to\ZR$ be a polynomial that is irreducible over $\ZR$ and $\deg_z F\ge 1$. 
Let $q\in\R[u_1,u_2]$ be nonzero of degree $\leq D$,  and set $Q(\vec{u},z)=q(\vec{u})$ so that $Q$ is independent of $z$. 
Let $Y\subset \ZR^3$ and
\[
    W\subset Z(F,Q)
\]
be finite sets with $\#W,\#Y\leq N$. 
Then
\begin{equation}
\label{eq:wall-energy}
    E(W,Y)\leq C_{m_F}\Lambda_{F}(W)D^2N^2.
\end{equation}
\end{lemma}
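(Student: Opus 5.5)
The plan is to avoid the translation curves $\Gamma_t$ and work directly with the curve structure of the wall $Z(F,Q)$. We write $E(W,Y)=\sum_t d_{W-Y}(t)^2$, but it is more convenient to expand it as a sum over quadruples $(w,y,w',y')$ with $w-y=w'-y'$, i.e.\ $w-w'=y-y'$. This gives
\[
E(W,Y)=\sum_t d_W(t)\,d_Y(t)
\]
by \eqref{useful-identity}. We will control $d_W(t)$ through the geometry of $W$, using only the crude facts $\sum_t d_Y(t)=(\#Y)^2\le N^2$ and $d_Y(t)\le N$.

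\textbf{Step 1: the wall is a union of curves.} Since $F$ is irreducible with $\deg_z F\ge1$ and $Q$ is independent of $z$, $F$ cannot divide $Q$, so $F$ and $Q$ are coprime. By Theorem~\ref{thm: Bezout}, $Z(F,Q)=\bigcup_{i=1}^s C_i(\R)$ with irreducible curves satisfying $\sum_i\deg C_i\le m_F D$; in particular $s\le m_F D$. We assign each point of $W$ to a single component, giving a disjoint partition $W=\bigsqcup_i W_i$ with $W_i\subset C_i(\R)$. Then
\[
d_W(t)=\sum_{i,j} d_{ij}(t),\qquad d_{ij}(t):=\#\{(w,w')\in W_i\times W_j: w-w'=t\}.
\]
Moreover, $d_{ij}(t)\le\#\bigl(C_i(\R)\cap (C_j+t)(\R)\bigr)$.

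\textbf{Step 2: split by whether $C_i=C_j+t$.} If $C_j+t\ne C_i$, these are distinct irreducible curves, and \eqref{eq:curve-bezout} gives $d_{ij}(t)\le \deg C_i\deg C_j$. Summing over all pairs, the generic contribution is
\[
\sum_t d_Y(t)\sum_{i,j}\deg C_i\deg C_j\le (m_FD)^2N^2 .
\]
For the exceptional $t$ with $C_j+t=C_i$, we use the observation that an irreducible curve invariant under a nonzero translation $s$ is a line parallel to $s$. Indeed, it is then invariant under $\mathbb Z s$, so it meets each line parallel to $s$ through one of its points in infinitely many points; hence it contains that line, and irreducibility forces it to be a single line.

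\emph{Case (a): $C_j$ is not a line.} Then there is at most one $t_{ij}$ with $C_j+t_{ij}=C_i$. Its contribution is $d_{ij}(t_{ij})d_Y(t_{ij})\le \#W_j\cdot N$. Summing over $i,j$ gives at most $s\,N\sum_j\#W_j\le m_F D N^2$.

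\emph{Case (b): $C_i, C_j$ are parallel lines.} These lie in $Z(F)$, so $\#W_i\le\Lambda_F(W)$. Using $d_Y\le N$, the contribution is at most
\[
N\sum_t d_{ij}(t)\le N\,\#W_i\,\#W_j\le N\Lambda_F(W)\#W_j .
\]
Summing over $i,j$ gives at most $m_F D\,\Lambda_F(W)N^2$.

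Adding the three contributions yields \eqref{eq:wall-energy} with $C_{m_F}\ll m_F^2$. We note that the exceptional set $\mathcal T_F$ never needs separate treatment here, since translates of lines are handled in case (b).

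The main point to get right is the exceptional case in Step 2. The argument depends on the rigidity fact that non-line irreducible curves have at most one translate matching a given curve. We must also make sure that Bezout is applied only to genuinely distinct irreducible components. This is why we pass to the disjoint partition $W_i$ instead of counting a point on several components.
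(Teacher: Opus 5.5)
Your proof is correct and follows essentially the same route as the paper. Both arguments use the B\'ezout decomposition of $Z(F,Q)$ into curves of total degree at most $m_FD$, a disjoint assignment $W=\bigsqcup_i W_i$, the bound $\deg C_i\deg C_j$ for non-coincident translates, the rigidity fact that only lines can be translated onto the same curve by two distinct vectors, and $\#W_i\le\Lambda_F(W)$ on line components. The only differences are bookkeeping and one small omission:
\begin{itemize}
\item You treat exceptional translations pair by pair, which bounds the non-line exceptional part by $O(m_FD\,N^2)$. The paper instead collects them into a global set $T$ and pays $\#T\,N^2\le R_0^2N^2$.
\item B\'ezout's theorem requires $q$ to be nonconstant. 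You should dispose of the constant case separately; it is trivial, since then $W=\varnothing$.
\end{itemize}
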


\begin{proof}
If $q$ is constant, then it is a nonzero constant and $W\subset Z(Q)=\varnothing$, so the result is trivial.
Henceforth assume that $\deg q\ge1$, so $D\geq 1$.
Since $F$ depends nontrivially on $z$ and since $F$ is irreducible over $\R$, the polynomials $F$ and $Q$ are coprime.
Now B\'ezout's Theorem (Theorem~\ref{thm: Bezout}) implies that there are irreducible affine algebraic curves $C_1,\dots,C_r$ defined over $\R$ such that
\[
    Z(F,Q)=\bigcup_{i=1}^rC_i(\R).
\]
Denote by $R_0 = \sum_{i=1}^r\deg C_i$, so 
\[
    R_0\leq m_FD.
\]
Assign every point of $W$ to one curve containing it.  
After discarding empty pieces, this gives a disjoint decomposition
\[
    W=\bigsqcup_{i=1}^r W_i
\]
with $W_i\subseteq C_i(\R)$. 

\smallskip

Let $I$ be the set of indices such that for each $i\in I$, $C_i$ is an affine line. 
Define
\[
    W_{\mathrm{line}}=\bigsqcup_{i\in I}W_i.
\]
Since every such line is contained in $Z(F)$, we have $\#I\leq R_0$, and
\begin{equation}
\label{eq: wall line points}
    \#W_{\mathrm{line}}\leq R_0\Lambda_{ F}(W).
\end{equation}

\smallskip

For $h\in\R^3$, define the translation map $\tau_h(C):=C+h$.  
A pair $(w_i,w_j)\in W_i\times W_j$ with $w_i-w_j=h$ forces
\[
    w_i\in C_i(\R)\cap\tau_h(C_j)(\R).
\]
Thus, for a given $h$ and a pair $(C_i, C_j)$ with $C_i\neq\tau_h(C_j)$, Theorem~\ref{thm: Bezout} implies
\begin{equation}
\label{not-translation}
    \#\{(w_i, w_j)\in W_i\times W_j: w_i-w_j=h \}\leq (\deg C_i)(\deg C_j).
\end{equation} 

Next, consider a pair $(C_i,C_j)$ that is not a pair of affine lines. 
Then there is at most one vector $h$ such that $C_i=\tau_h(C_j)$. 
Indeed, if both $h$ and $h'$ satisfy $C_i=\tau_h(C_j)$ and $C_i=\tau_{h'}(C_j)$, then $C_i$ would be invariant under translation by the nonzero vector $a=h'-h$.
Iterating it gives $C_i = C_i + na$ for every $n\in\mathbb Z$. 
Thus, for any polynomial $P$ that vanishes on $C_i$ and for every $w\in C_i$, the univariate polynomial $g(x)=P(w+xa)$ vanishes on every integer, hence is identically $0$.
Hence $w+\R a\subset C_i(\R)$.
Since an irreducible affine curve containing an affine line must equal that line, $C_i(\ZR)$ is a line.
Similarly, $C_j(\ZR)$ is also a line, and this contradicts the choice of the pair $(C_i, C_j)$.

Let $T$ be the set of all such exceptional translations $h$ arising from ordered pairs $(C_i,C_j)$ that are not both affine lines and satisfy $C_i=\tau_h(C_j)$.
Then the aforementioned discussion implies
\[
    \#T\leq r^2\leq R_0^2.
\]
Let $d_*(h)$ denote the number of pairs $(w_i,w_j)$ in $W\times W$ such that $w_i-w_j=h$ and the associated component pair $(C_i,C_j)$, with $w_i\in C_i$ and $w_j\in C_j$, is not a pair of two affine lines.
For $h\notin T$, by \eqref{not-translation} and summing over the relevant ordered pairs, 
\[
    d_*(h)\leq \sum_{i,j}(\deg C_i)(\deg C_j)=\Big(\sum_{i=1}^r\deg C_i\Big)^2\leq R_0^2.
\]
For $h\in T$, we trivially have $d_*(h),d_{Y}(h)\leq N$. Since $\sum_hd_{Y}(h)=|Y|^2\leq N^2$, 
\begin{align}
\nonumber
    \sum_h d_*(h)d_{Y}(h)=&\sum_{h\not\in T} d_*(h)d_{Y}(h)+\sum_{h \in T} d_*(h)d_{Y}(h)\\  \label{eq:wall-nonlinear-contribution}
    \leq &\, R_0^2N^2+(\#T)N^2\leq 2R_0^2N^2.
\end{align}

Let $d_{r}(h)$ count the number of remaining pairs, that is, the number of $(w_i,w_j)\in W\times W$ with $h=w_i-w_j$ such that the two assigned components $C_i, C_j$ with $w_i\in C_i, w_j\in C_j$ are both affine lines.
Then
\[
    \sum_h d_{r}(h)=(\#W_{\mathrm{line}})^2.
\]
Since $d_{Y}(h)\leq N$ for each $h$, by \eqref{eq: wall line points}, we obtain
\begin{equation}
\label{eq:wall-line-contribution}
    \sum_h d_{r}(h)d_{Y}(h) \leq N(\#W_{\mathrm{line}})^2
    \leq R_0\Lambda_{ F}(W) N^2.
\end{equation}
Note that
\[
    E(W,Y)=\sum_h\bigl(d_*(h)+d_{r}(h)\bigr)d_{Y}(h),
\]
Combining \eqref{eq:wall-nonlinear-contribution} and
\eqref{eq:wall-line-contribution}, and using $R_0\leq m_FD$, we prove \eqref{eq:wall-energy}.
\end{proof}

\bigskip

\section{Proof of the main theorem} 
\label{proof-section}

In this section we prove Theorem~\ref{main-thm}.
We first show that Theorem \ref{main-thm} reduces to the following special case.

\begin{proposition}
\label{main-prop}
Let $F:\ZR^3\to\ZR$ be a polynomial that is irreducible over $\ZR$ and  $\deg_z F\ge 1$.
Let $\Sigma\subset Z(F)$ be the graph of a function of $\vec{u}$, where $x=(\vec{u},z)\in\ZR^3$.
If $\deg F\geq2$, then for any $\e>0$, there exists a constant $C_{m_F,\e}$ depending only on the degree of $F$ such that for any finite $X\subset \Sigma$,
\begin{equation}
\label{prop-esti}
    E(X)\leq C_{m_F,\e}\Lambda_{F}(X)(\#X)^{2+\e}.
\end{equation}
\end{proposition}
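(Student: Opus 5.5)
We prove Proposition~\ref{main-prop} by induction on $\#X$, following the outline. The engine is the iteration inequality \eqref{iteration-intro}: for $D\ge2$,
\[
E(X)\le A_{m_F}\Lambda_F(X)D^2(\#X)^2+A_{m_F}D^4\sup_{X'\subset X,\ \#X'\le \alpha D^{-2}\#X}E(X').
\]
Since $\Lambda_F(X')\le\Lambda_F(X)$, the induction hypothesis gives $E(X')\le C\Lambda_F(X)(\alpha D^{-2}\#X)^{2+\e}$. Hence
\[
E(X)\le \Lambda_F(X)(\#X)^{2+\e}\bigl(A D^2(\#X)^{-\e}+AC\alpha^{2+\e}D^{-2\e}\bigr).
\]
Fix $D=D(\e,m_F)$ large so that $A\alpha^{2+\e}D^{-2\e}\le 1/2$. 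Then take $C\ge 2AD^2$, which also covers the base case $\#X\le D^2$ via the trivial bound $E(X)\le(\#X)^3$. This closes the induction.

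To prove the iteration inequality, apply Theorem~\ref{thm:planar-partition} to $\pi(X)$, using that $\pi$ is injective on $\Sigma$. This gives $q$ of degree at most $D$. Let $X_j$ be the points of $X$ lying over the components $O_j$ of $\R^2\setminus Z(q)$; there are at most $\alpha D^2$ of them, each with $\#X_j\le\alpha\#X/D^2$. Let $W=X\cap Z(Q)$ with $Q(\vec u,z)=q(\vec u)$, and $Y=\bigsqcup X_j$. Then
\[
E(X)\le 2E(Y)+4E(X,W)
\]
by expanding $d_X=d_Y+d_{W-Y}+\dots$ and applying Cauchy--Schwarz, or simply by bounding $E(X)\ll E(Y)+E(W,X)$. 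Lemma~\ref{lem: wall energy}, with $N=\#X$ and $\Lambda_F(W)\le\Lambda_F(X)$, gives $E(W,X)\ll_{m_F}\Lambda_F(X)D^2(\#X)^2$.

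For the cell part, I will bound $E(Y)$ through the key claim: for $t\notin\mathcal T_F$, the number of pairs $(i,j)$ with $d_{X_i-X_j}(t)\neq0$ is $\ll_{m_F}D$. Any $x\in X_i$ with $x-t\in X_j$ lies on $\Gamma_t=Z(F(x),F(x-t))$, which by Lemma~\ref{difference-curve-lem} is a curve of degree at most $m_F^2$ given by coprime polynomials. Apply Theorem~\ref{thm:curve-cutting} with $H_1=Q(x)$ and $H_2=Q(x-t)$. Then $\Gamma_t\setminus(Z(H_1)\cup Z(H_2))$ has $\ll_{m_F}D$ connected components. On each component, $\pi(x)$ stays in a single cell and $\pi(x-t)$ stays in a single cell, by connectedness and continuity. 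So each component determines at most one pair $(i,j)$.

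Cauchy--Schwarz then gives $d_Y(t)^2\ll D\sum_{i,j}d_{X_i-X_j}(t)^2$. Summing over $t\notin\mathcal T_F$ gives $D\sum_{i,j}E(X_i,X_j)$, and Corollary~\ref{cor:difference curves} handles $t\in\mathcal T_F$. Next, by \eqref{useful-identity},
\[
\sum_{i,j}E(X_i,X_j)=\sum_t\Bigl(\sum_j d_{X_j}(t)\Bigr)^2.
\]
The same curve-cutting argument, now counting indices $j$ with $d_{X_j}(t)\ne0$ (pairs $(j,j)$), bounds this by $D\sum_j\sum_{t\notin\mathcal T_F}d_{X_j}(t)^2$ plus a $\mathcal T_F$ term of size $\le\Lambda_F(X)(\#X)^2$. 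The result is $E(Y)\ll\Lambda_F(X)D(\#X)^2+D^2\cdot\alpha D^2\sup_j E(X_j)$, which is the iteration inequality.

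The main obstacle is the curve-cutting claim. One must check the hypotheses of Theorem~\ref{thm:curve-cutting}, namely coprimality for $t\notin\mathcal T_F$ and degrees at most $m_F$. One must also handle points of $\Gamma_t$ off the graph $\Sigma$ or on the walls; these are harmless, since the $X_i$ avoid $Z(Q)$ and we only need an upper bound on the number of pairs.
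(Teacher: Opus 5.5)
Your proposal is correct and follows the paper's proof. The shared ingredients are planar partitioning of $\pi(X)$ lifted to $\Sigma$, and curve cutting of $\Gamma_t$ by $q(\pi x)$ and $q(\pi(x-t))$, used twice and linked by $\sum_{i,j}E(X_i,X_j)=\sum_t\bigl(\sum_j d_{X_j}(t)\bigr)^2$. Also shared are Corollary~\ref{cor:difference curves} for $t\in\mathcal T_F$, Lemma~\ref{lem: wall energy} for the wall, and induction on $\#X$. The only blemish is cosmetic: your splitting argument gives $E(X)\le 2E(Y)+8E(X,W)$, not constant $4$, which can fail in general. Your fallback $E(X)\ll E(Y)+E(W,X)$ is all that is needed.
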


\begin{proof}[Proposition \ref{main-prop} implies Theorem \ref{main-thm}]

For given $F$ and $X$ from the statement of Theorem \ref{main-thm}, apply Lemma~\ref{lem: graph cover} to obtain the decomposition
\begin{equation}
\label{graph-decomp}
    X=\bigsqcup_{i=1}^K X_i, 
\end{equation}
where each $X_i$ lies on one coordinate graph. 
As shown in \eqref{eq:linear-invariance}, linear transformations preserve difference equations, affine lines in $ Z(F)$, and the quantity $\Lambda_{F}(X)$.

Apply Proposition \ref{main-prop} to each $X_i$ in \eqref{graph-decomp} so that, since $\Lambda_{F}(X_i)\leq\Lambda_{F}(X)$,
\begin{align*}
    E(X)
    \ll_K\sum_{i=1}^K E(X_i)\ll_K\sum_{i=1}^K C_{m_F,\e}\Lambda_{F}(X_i)(\#X_i)^{2+\e}\ll_K C_{m_F,\e}\Lambda_{F}(X)(\#X)^{2+\e}.
\end{align*}
Since $K^4\leq m_F^4$, a conclusion from Lemma~\ref{lem: graph cover}, this proves \eqref{main-esti}. 
\qedhere

\end{proof}

\medskip

We prove Proposition \ref{main-prop} in the rest of the section.

\begin{proof}[Proof of Proposition \ref{main-prop}]

For $t\in\R^3$, define
\[
    \Gamma_t:= Z\bigl(F(x),F(x-t)\bigr)
\]
as in Lemma \ref{difference-curve-lem}. 
Also, recall \eqref{translation-stabilizer} that
\[
    \mathcal{T}_F = \{t\in\R^3 : F(x-t)=F(x) \text{ for every }x\},
\]
and that if $t\notin\mathcal T_F$, then from Lemma \ref{difference-curve-lem} we know that $\Gamma_t$ is a union of algebraic curves of total degree at most $m_F^2$. 
We remark that the constant $C_{m_F}$ may vary from line to line, but it always depends only on $m_F$.

\medskip

\noindent {\bf Step 1: planar polynomial partitioning and pullback.}

Fix an integer $D\geq2$. 
Let $\pi:\ZR^3\to\ZR^2$ be the projection onto the horizontal plane.
Apply Theorem~\ref{thm:planar-partition} to $\pi(X)$ to obtain
\begin{enumerate}
    \item A polynomial $q\in\R[u_1,u_2]$ with degree $\leq D$.
    \item A cell decomposition $\ZR^2=Z(q)\sqcup(\sqcup_i \Omega_i)$.
\end{enumerate}
Discard those $\Omega_j$ that contain no points in $\pi(X)$.
For the remaining cells $\Omega_j$, define 
\begin{equation}
\nonumber
    X_i=X\cap\pi^{-1}(\Omega_i),
    \qquad
    W=X\cap\pi^{-1} (Z(q)),
    \qquad
    Y=X\setminus W=\bigsqcup_iX_i.
\end{equation}
Since $\pi$ is one-to-one on $\Sigma$, the planar polynomial partitioning moreover gives
\begin{equation}
\label{eq:cell-size-count}
    \#X_i\leq\alpha (\#X)D^{-2},
    \qquad\text{and}\qquad
    \#\{i:X_i\neq\varnothing\}\leq\alpha D^2.
\end{equation}

\medskip

\noindent {\bf Step 2: the first crossing.}

For each pair of cell labels $(i,j)$, write $d_{ij}(t)=d_{X_i-X_j}(t)$. 
Lift the two planar walls to $\R^3$ by
\[
 Q_0(x)=q(\pi x),
 \qquad
 Q_t(x)=q(\pi(x-t)).
\]
Fix $t\notin\mathcal T_F$.  
By Lemma \ref{difference-curve-lem}, $\Gamma_t$ is a real algebraic set of dimension at most one, defined by polynomials of degree at most $m_F$.
Applying Theorem \ref{thm:curve-cutting} to $\Gamma_t$ and to the polynomials $Q_0,Q_t$, we know that
\begin{equation}
\label{eq:cut-spatial-difference-curve}
    \Gamma_t\setminus\bigl( Z(Q_0)\cup Z(Q_t)\bigr)
\end{equation}
has at most $C_{m_F}D$ semialgebraically connected components.

Suppose that $d_{ij}(t)>0$, so there exists $x\in X_i$ such that $x-t\in X_j$, which implies that $x\in\Gamma_t$ and both $Q_0(x)$ and $Q_t(x)$ are nonzero. 
On each connected component of \eqref{eq:cut-spatial-difference-curve}, the continuous maps
$x\mapsto\pi x$ and $x\mapsto\pi(x-t)$ remain in fixed connected components of $\R^2\setminus Z(q)$.  
Thus one such connected component determines at most one ordered pair of cell labels $(i,j)$, yielding
\begin{equation}
\label{pairs-count}
    \#\{(i,j):d_{ij}(t)>0\}\leq C_{m_F}D.
\end{equation}
This implies that for $t\notin\mathcal T_F$, we have
\begin{equation}
\nonumber
    d_Y(t)^2 =\Big(\sum_{i,j}d_{ij}(t)\Big)^2\leq C_{m_F}D\sum_{i,j}d_{ij}(t)^2.
\end{equation}
By \eqref{eq:stabilizer-energy} and the fact that $\Lambda_{F}(Y)\leq\Lambda_F(X)$, summing over $t$ yields
\begin{equation}
\label{eq:first-crossing-energy}
    E(Y)\leq\Lambda_{F}(X) (\#X)^2
    +C_{m_F}D\sum_{i,j}\sum_t d_{ij}(t)^2.
\end{equation}

\medskip

\noindent {\bf Step 3: the second crossing.}

For every cell, write $d_i(u)=d_{X_i}(u)$. 
The identity \eqref{useful-identity} gives
\begin{equation}
\label{identity-proof}
    \sum_{i,j}\sum_t d_{ij}(t)^2=\sum_u\Big(\sum_i d_i(u)\Big)^2.
\end{equation}
For $u\notin\mathcal T_F$, by taking $i=j$ in \eqref{pairs-count}, we have
\begin{equation}
\nonumber
    \#\{i:d_i(u)>0\}\leq C_{m_F}D,
\end{equation}
which gives
\begin{equation}
\label{not-in-cT}
    \Big(\sum_i d_i(u)\Big)^2\leq C_{m_F}D\sum_i d_i(u)^2, \qquad u\not\in \cT_F. 
\end{equation}
For $u\in\mathcal T_F$, one has
$\sum_i d_i(u)\leq d_Y(u)$.  
Therefore, by \eqref{eq:stabilizer-energy} and the fact that $\Lambda_{F}(Y)\leq\Lambda_F(X)$ and $\#Y\leq \#X$, we have
\begin{equation}
\label{eq:exceptional-second-crossing}
    \sum_{u\in\mathcal T_F}
    \Big(\sum_i d_i(u)\Big)^2
    \leq\sum_{u\in\mathcal T_F}d_Y(u)^2
    \leq\Lambda_{F}(X)(\#X)^2.
\end{equation}
Combine \eqref{identity-proof}, \eqref{not-in-cT}, and
\eqref{eq:exceptional-second-crossing} so that
\begin{equation}
\nonumber
    \sum_{i,j}\sum_t d_{ij}(t)^2\leq\Lambda_{F}(X)(\#X)^2+C_{m_F}D\sum_iE(X_i).
\end{equation}
Substituting this into \eqref{eq:first-crossing-energy} gives
\begin{equation}
\label{eq:off-wall}
    E(Y)
    \leq C_{m_F}\Lambda_{F}(X)D^2(\#X)^2
    +C_{m_F}D^2\sum_iE(X_i).
\end{equation}

\medskip

\noindent {\bf Step 4: Iteration and finishing the proof.}

Apply Lemma \ref{lem: wall energy} to the polynomial $q$ (and $F$ and $\Sigma$ in Proposition \ref{main-prop}) from Step 1.
Since $W\subseteq X$, one has $\Lambda_{F}(W)\leq\Lambda_F(X)$ and $\#W\leq \# X$.
Thus, for any finite $V\subset \ZR^3$ with $\#V\leq \#X$, 
\begin{equation}
\label{eq:wall-mixed-ready}
    E(W,V)\leq C_{m_F}\Lambda_{F}(X)D^2(\#X)^2.
\end{equation}

The disjoint decomposition $X=Y\sqcup W$ gives the pointwise estimate
\begin{equation}
\nonumber
    d_X(t)=d_Y(t)+d_{Y-W}(t)+d_{W-Y}(t)+d_W(t).
\end{equation}
Using the triangle inequality, and \eqref{eq:wall-mixed-ready} with $V=Y$ and $V=W$, we have
\begin{equation}
\nonumber
    E(X)\ll E(Y)+C_{m_F}\Lambda_F(X)D^2(\#X)^2.
\end{equation}
By \eqref{eq:off-wall}, we conclude that
\begin{equation}
\nonumber
    E(X)\leq C_{m_F}\Lambda_F(X)D^2(\#X)^2+C_{m_F}D^2\sum_iE(X_i).
\end{equation}
Together with \eqref{eq:cell-size-count}, it implies the useful iteration
form
\begin{equation}
\label{eq:recurrence}
    E(X)\leq C_{m_F}\Lambda_F(X) D^2(\#X)^2+C_{m_F}\alpha D^4 \sup_{\substack{X'\subset X\\ \#X'\leq\alpha (\#X)D^{-2}}}E(X'),
\end{equation}
as $\Lambda_{F}(X')\leq\Lambda_F(X)$ for every $X'\subset X$.

\medskip

Finally, we fix $\e>0$, and let $C_0=C_0(m_F)$ be a constant for which
\eqref{eq:recurrence} holds.  
Choose an integer $D=D(m_F,\e)\geq2$ and $C_{m_F,\e}$ sufficiently large such that
\begin{equation}
\nonumber
    \alpha D^{-2}\leq \frac{1}{2}, \qquad C_0\alpha^{3+\e}D^{-2\e}\leq\frac12,\qquad C_{m_F,\e}\geq2C_0D^2.
\end{equation}
We prove \eqref{prop-esti} by induction on $N=(\#X)$.  
The base case $N=1$ is clear.

Let $N\geq2$ and assume that \eqref{prop-esti} holds for every finite subset of $\Sigma$ of size $\leq N/2$. 
Since $\Lambda_{F}(X')\leq\Lambda_F(X)$ for all $X'\subset X$, using induction to handle $E(X')$ in \eqref{eq:recurrence}, 
\begin{align}
\nonumber
    E(X)\,&\leq C_0\Lambda_F(X) D^2N^2+C_0\alpha D^4\cdot C_{m_F,\e}\Lambda_F(X)(\alpha N/D^2)^{2+\e}\\ \nonumber
    &=(C_0C_{m_F,\e}^{-1}D^{2}N^{-\e}+C_0\alpha^{3+\e}D^{-2\e}) C_{m_F,\e}\Lambda_F(X)N^{2+\e}
\end{align}
Note that our choice of parameters guarantees  $C_0C_{m_F,\e}^{-1}D^{2}N^{-\e}+C_0\alpha^{3+\e}D^{-2\e}\leq1$, which closes the induction and proves \eqref{prop-esti}.
\qedhere

\end{proof}

\bigskip

\bibliographystyle{alpha}
\bibliography{bibli}

\end{document}